    \newtheorem{theorem}{Theorem}[section]
    \newtheorem{cor}{Corollary}
    \theoremstyle{definition}
    \newtheorem{defi}{Definition}
    \theoremstyle{remark}
\title{Combinatorics of positional colored compositions}
\author{Andrew Li\footnote{\url{al5360@princeton.edu} Princeton University, Princeton, NJ 08544} and Hua Wang\footnote{\url{hwang@georgiasouthern.edu} Georgia Southern University, Statesboro, GA 30460}}
\date{}
\begin{document}

\maketitle

\begin{abstract}
    We consider colored compositions where only some parts are allowed different colors, depending on their locations in the composition. The counting sequences are obtained through generating functions. Connections to many other combinatorial objects are discussed, with combinatorial arguments provided and generalized for these observations. 
\end{abstract}

\noindent {\bf Keywords:} integer compositions, colored compositions, combinatorial proofs

\noindent {\bf 2010 MSC:} 05A17, 11B37

\section{Introduction}

Compositions are ordered sums of positive integers, which we call parts. 
For example, there are four compositions of 3. Namely,
\[1+1+1, \; 1+2, \; 2+1, \; 3\]

The $n$-color compositions, also known as colored compositions, are compositions where each part $k$ in the composition is given a color from 1 to $k$. We often denote the color of each part (of size $>1$) by a subscript. 
For example, there are eight colored compositions of 3. Namely,
\[1+1+1, \; 1+2_1, \; 1+2_2, \; 2_1+1, \; 2_2+1, \; 3_1, \; 3_2, \; 3_3\]

The $n$-color compositions were first introduced in \cite{Agarwal2000}. Since then, different variations of $n$-color compositions have been studied. 
In particular, spotted tilings \cite{Hopkins2012} are used to represent colored compositions, with the location of each spot denoting the color of each part. See, for example, Figure~\ref{fig:spotted}.

\begin{figure}[htbp]
\centering
    \begin{tikzpicture}[scale=.6]

        \draw [line width=1mm] (0,-4) -- (3,-4) -- (3, -3)-- (0, -3) -- cycle;
        \draw (1,-4) -- (1,-3);
        \draw (2,-4) -- (2,-3);

        \node[fill=black,circle,inner sep=3pt] at (0.5,-3.5) {};

        \draw [line width=1mm] (0,-4-2) -- (3,-4-2) -- (3, -3-2)-- (0, -3-2) -- cycle;
        \draw (1,-4-2) -- (1,-3-2);
        \draw (2,-4-2) -- (2,-3-2);

        \node[fill=black,circle,inner sep=3pt] at (2.5,-3.5-2) {};

     \end{tikzpicture}
\caption{The spotted tiling representation of $3_1$ (top) and $3_3$ (bottom).}
\label{fig:spotted}
\end{figure}
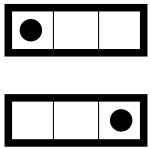

In \cite{Hopkins2021}, \cite{Acosta2019}, and \cite{Guo2012}, $n$-color compositions with  restricted colors are studied. In \cite{Dedrickson2012} and \cite{Collins2013}, various bijections involving $n$-color compositions as well as binary and ternary strings are presented. In \cite{Agarwal2008}, a bijection between $n$-color compositions and weighted lattice paths is proven. As related concepts, $n$-color cyclic compositions are studied in \cite{Gibson2018}, and palindromic $n$-color compositions are studied in \cite{Narang2006} and \cite{Guo2010}. 

In this paper, we consider colored compositions with only some parts allowing colors depending on their ``position'' in the composition. In Section~\ref{sec:count}, we present the basic definitions and the enumerative results through generating functions. Through the counting sequences, we found connections to many other combinatorial objects. Various bijections and some generalizations are discussed in Section~\ref{sec:com}. %Some general counting sequences are also listed in the Appendix. Lastly, we summarize our findings in Section~\ref{sec:con} and propose some directions for potential future works.

\begin{comment}
Our research revolves around two specific compositions. Their sequences are listed below.  

Various colored compositions have been studied before, such as colored compositions where some colors are restricted. However, colored compositions where the number of colors is based on the position of the part have not been studied before. Our research into such compositions led us to the following sequences.     

\\ \url{https://oeis.org/A034943} (sec 2.1)\\ \url{https://oeis.org/A095263} (sec 2.2)\\ 

\end{comment}

\section{Positional $n$-color compositions}
\label{sec:count}

The main concepts of interest in this paper are the so-called positional $n$-color compositions. 

\begin{defi}
    A $(m,k)$-$n$-colored composition of $\ell$ is a composition of $\ell$ such that the parts at positions $k \pmod m$ are $n$-colored while all other parts are not colored.
\end{defi}

For example, 
\[3_2 + 4 + 1 + 2_1 + 3 + 6 + 1_1\]
is a $(3,1)$-$n$-colored composition of $20$, where the parts in the first, fourth, and seventh positions (i.e., $1 \pmod 3$ positions) are $n$-colored.

As special cases, we refer to $(2,0)$-$n$-colored compositions as {\it EVEN colored compositions} and $(2,1)$-$n$-colored compositions as {\it ODD colored compositions}. We start with examining the generating functions of these compositions.

For each non-colored part, we have 
\[x+x^2+x^3+\cdots = \frac{x}{1-x} . \]

For each $n$-colored part, we have
\[x+2x^2+3x^3+\cdots = x (1+2x+3x^2 + \cdots ) = x \frac{d}{dx} \left( \frac{x}{1-x} \right) = \frac{x}{(1-x)^2} .\]

\begin{comment}
To get the generating function for the entire composition, we take the sum of $(\frac{x}{1-x})^k$ for all positive $k$, where $k$ represents the number of parts of a composition. 
\[\sum_{k=1}^\infty (\frac{x}{1-x})^k = \frac{x}{1-2x}\]
by the formula for a geometric series. 
\\ For $n$-colored parts of a composition, we can represent them as 
\[x+2x^2+3x^3+\cdots\]
with the coefficient $n$ of $nx^n$ representing the $n$ number of colors for a part of size $n$. 
\\ By taking the derivative of $x+x^2+x^3+\cdots\ = \frac{x}{1-x}$ and multiplying by $x$ we get the colored part generating function.
\[x+2x^2+3x^3 +\cdots = \frac{x}{(1-x)^2}\]
\end{comment}

\subsection{EVEN colored compositions}
In such compositions, odd positioned parts are colorless and even positioned parts are $n$-colored. We consider two cases depending on the total number of parts.
\begin{comment}
\[\textbf{Odd Parts: } x+x^2+x^3+\cdots = \frac{x}{1-x}\]
\[\textbf{Even Parts: } x+2x^2+3x^3+\cdots = \frac{x}{(1-x)^2}\]

We have two cases, one where there are an odd number of parts and one where there are an even number of parts. 

\end{comment}

\begin{itemize}
\item[{\it Odd:}]
When we have an odd number of parts, there is at least one non-colored part, followed by any nonnegative number of pairs of $n$-colored and non-colored parts. 
Each pair of colored and non-colored parts corresponds to the generating function
$$\frac{x}{1-x} \cdot \frac{x}{(1-x)^2} = \frac{x^2}{(1-x)^3} . $$
Hence the corresponding part of the generating function is
\begin{equation}\label{eq:genodd} \frac{x}{1-x}\sum_{i=0}^{\infty} \left(\frac{x^2}{(1-x)^3}\right)^i = \frac{x(1-x)^2}{(1-x)^3-x^2} .
\end{equation}

\item[{\it Even:}]
In this case, there are a positive number of pairs of colored and non-colored parts, resulting in the generating function
\begin{equation}\label{eq:geneven} 
\sum_{i=1}^{\infty}\left(\frac{x^2}{(1-x)^3} \right)^i= \frac{x^2}{(1-x)^3-x^2}.
\end{equation}
\end{itemize}

The sum of \eqref{eq:genodd} and \eqref{eq:geneven} gives us the generating function $F_{e}(x)$ of EVEN colored compositions
\begin{equation}\label{eq:genE}
\frac{x^3-x^2+x}{-x^3+2x^2-3x+1}.
\end{equation}
%The first few terms of this generating function are
%\[x+2x^2+5x^3+12x^4 + \cdots\]
The corresponding counting sequence is listed as A034943 in the Online Encyclopedia of Integer Sequences: \url{https://oeis.org/A034943}. 

\subsection{ODD colored Compositions}
Through a similar fashion, we now consider compositions where odd positioned parts are $n$-colored and even positioned parts are colorless in two cases, depending on the number of parts:
%\[\textbf{Odd Parts: } x+2x^2+3x^3+\cdots = \frac{x}{(1-x)^2}\]
%\[\textbf{Even Parts: } x+x^2+x^3+\cdots = \frac{x}{1-x}\]
\begin{itemize}
    \item[{\it Odd:}]
When we have an odd number of parts, there is at least one colored part, and then any non-negative number of pairs of colored and non-colored parts. 
\[\frac{x}{(1-x)^2}\sum_{i=0}^{\infty} \left(\frac{x^2}{(1-x)^3}\right)^i = \frac{x(1-x)}{(1-x)^3-x^2}\]
\item[{\it Even:}]
When we have an even number of parts, there are a positive number of pairs of colored and non-colored.
\[\sum_{i=1}^{\infty} \left(\frac{x^2}{(1-x)^3}\right)^i = \frac{x^2}{(1-x)^3-x^2} \]
\end{itemize}

Summing up these generating functions, we get
\[\frac{x}{-x^3+2x^2-3x+1}\]
%The first few terms of this generating function are
%\[x+3x^2+7x^3+16x^4 + \cdots.\]
The corresponding counting sequence is listed as A095263 in the Online Encyclopedia of Integer Sequences: \url{https://oeis.org/A095263}. 

\begin{comment}
\subsection{}
Now, let us examine compositions where only the parts 
$\equiv 0 \pmod{3}$ are n-color and all other parts have one color. 
If we use the same processes as above, we get the generating function
\[\frac{1-x}{-x^3+2x^2-3x+1}\]
The first few terms of this generating function are
\[x+2x^2+4x^3+9x^4 + \cdots\]
\end{comment}

\subsection{The $(m,k)$-$n$-colored compositions}
Through similar analysis, we now consider the more general $(m,k)$-$n$-colored compositions. We study the generating function in three cases depending on the number of parts: 
\begin{enumerate}
\item{$0 \pmod m$:} Every $m$ parts has 1 $n$-colored part and $m-1$ non-colored parts. Thus, each set of $m$ parts is represented by
\[\frac{x}{(1-x)^2}\left(\frac{x}{1-x}\right)^{m-1} = \frac{x^m}{(1-x)^{m+1}}.\]
Hence the corresponding part of the generating function is: 
\[\sum_{i=1}^{\infty} \left(\frac{x^m}{(1-x)^{m+1}}\right)^i = \frac{x^m}{(1-x)^{m+1} - x^m}.\]
\item{$j\pmod m$, where $1 \leq j \leq k-1$:} There are $j$ non-colored parts in addition to the previous case.  
Hence the corresponding part of the generating function is: 
\[\sum_{j = 1}^{k-1}\left(\frac{x}{1-x}\right)^j\sum_{i=0}^{\infty} \left(\frac{x^m}{(1-x)^{m+1}}\right)^i = \frac{\sum_{j=1}^{k-1}x^j(1-x)^{m-j+1}}{(1-x)^{m+1}-x^m}.\]
\item $\ell \pmod m$, where $k \leq \ell \leq m-1$: There are $\ell-1$ non-colored parts and one $n$-colored part in addition to those discussed in Case (1).  
Hence the corresponding generating function is: 
\[\sum_{\ell = k}^{m-1}\left(\frac{x^{\ell}}{(1-x)^{\ell+1}}\right)\sum_{i=0}^{\infty} \left(\frac{x^m}{(1-x)^{m+1}}\right)^i = \frac{\sum_{\ell = k}^{m-1}x^{\ell}(1-x)^{m-\ell}}{(1-x)^{m+1}-x^m}.\]
\end{enumerate}

We may obtain the generating function by summing these three cases. 

%In Appendix~\ref{app:A}, we provide an example of the data obtained from this generating function. {\color{blue} Some discussion to be added......}

\section{Connections to other combinatorial objects}
\label{sec:com}

From the counting sequences, the EVEN and ODD colored compositions appear to have very interesting connections to a rich variety of combinatorial concepts. We explore these connections in this section. Many combinatorial proofs are given through new bijections, and some new connections are presented as generalizations.

\subsection{$n$-color compositions restricting color 2}
In Section 2.2 of \cite{Hopkins2021}, the recurrence relation 
\[a(n) = 3a(n-1)-a(n-2)-a(n-k)+a(n-k-d-1)\]
is presented, where $a(n)$ is the number of compositions of $n$ restricting colors $k, k+1, \hdots , k+d$. Letting $k = 2$ and $d= 0$, we have
\[a(n)= 3a(n-1)-2a(n-2)+a(n-3),\]
where $a(n)$ is the number of $n$-color compositions restricting the color 2, resulting in the same counting sequence as the EVEN colored compositions.

\begin{theorem}
For any positive integer $k$, the number of colored compositions of $k$ restricting the color 2 is the same as the number of EVEN colored compositions of $k$.     
\end{theorem}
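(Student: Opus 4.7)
The plan is to compute the generating function for $n$-color compositions of $k$ restricting color $2$ and observe that it coincides with the generating function $F_e(x)$ obtained in \eqref{eq:genE} for EVEN colored compositions.

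I would first derive the generating function $R(x)$ for a single part in an $n$-color composition with color $2$ forbidden. A part of size $1$ has the unique color $1$, while a part of size $m \geq 2$ admits $m-1$ permissible colors (namely $1, 3, 4, \ldots, m$). Hence
\[R(x) = x + \sum_{m \geq 2}(m-1)x^m = x + \frac{x^2}{(1-x)^2} = \frac{x - x^2 + x^3}{(1-x)^2}.\]
Since such a composition is an ordered sequence of one or more parts, its counting generating function is $\sum_{k \geq 1} R(x)^k = R(x)/(1 - R(x))$. A short simplification gives
\[\frac{R(x)}{1 - R(x)} = \frac{x - x^2 + x^3}{(1-x)^2 - (x - x^2 + x^3)} = \frac{x - x^2 + x^3}{1 - 3x + 2x^2 - x^3},\]
which is exactly $F_e(x)$. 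Comparing coefficients of $x^k$ on both sides then yields the theorem.

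As a combinatorial alternative, one can pursue the identity $R(x)(1 + P(x)) = P(x)(1 + Q(x))$, where $P(x) = x/(1-x)$ and $Q(x) = x/(1-x)^2$ are the single-part generating functions for colorless and $n$-colored parts, respectively. This identity reflects a bijection between pairs \emph{(CR2 part, optional colorless part)} and pairs \emph{(colorless part, optional $n$-colored part)}, and in principle it can be iterated along the sequential structure of a composition to produce an explicit bijection between the two families. The main obstacle for such a combinatorial proof is that the color data sits in different positions in the two structures — every part in a color-restricted composition carries a color, whereas in an EVEN composition only every other part does — so any bijection must redistribute the color information globally across several parts rather than being a simple part-by-part relabeling.
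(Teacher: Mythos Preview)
Your generating function argument is correct: the computation of $R(x)$ and the simplification to $R(x)/(1-R(x))=F_e(x)$ are both right, so equality of the counting sequences follows.

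This is a genuinely different route from the paper. The paper gives an explicit bijection rather than a generating-function identity: scanning a color-$2$-restricted composition from left to right, whenever an odd-positioned part $p_c$ has $c\geq 3$ it is split into an uncolored part $c-2$ followed by $(p-c+2)_2$, and parts with $c=1$ are simply uncolored; this produces an EVEN colored composition, and the inverse merges each even-positioned part of color $2$ with its predecessor. Your approach is shorter and avoids the bookkeeping of checking that such a map and its inverse are well-defined, but it does not yield an explicit correspondence. Interestingly, your closing remark about the identity $R(x)(1+P(x))=P(x)(1+Q(x))$ is essentially the algebraic shadow of the paper's bijection: one color-restricted part corresponds to one uncolored part optionally followed by a colored part (coming from a split), which is precisely how the paper redistributes the color data. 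So the obstacle you flag is real but surmountable, and the paper's construction shows one concrete way to iterate that local correspondence into a global bijection.
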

\begin{proof}
We establish a bijection between the two groups of compositions by first removing the compositions that exist for both groups. These include compositions such as $(1, 1, 1, 1, \hdots, 1)$.

We now consider the compositions restricting the color 2 that are not EVEN colored compositions. That is, the compositions restricting color 2 that have one or more odd positioned parts with color greater than 1. 

%we can use spotted tilings to map this composition to a composition where odd positioned parts have one color and even positioned parts have n-color.

In the spotted tiling representation of such compositions, we work from left to right. Whenever there is an odd positioned part with color greater than 1, say $p_c$ with $c\geq 3$, we cut it into two parts: $(c-2)+(p-c+2)_{2}$. Note that now the $n$-colored part $(p-c+2)_{2}$ becomes even positioned, and the odd positioned part $(c-2)$ is not $n$-colored. If an odd positioned part has color 1, then there is no change to that part other than removing its color. 
%the place of the circle will remain the same, but we will create a new tile with color 1 by adding a line two spaces to the left of the circle, which would give us an odd-positioned part of color 1 and an even positioned part of size 2. 

%Keep in mind that the minimum color of the original composition would have to be 3 because of the restriction of color 2, so this result is always obtainable. 

For the reverse mapping, we start with an EVEN colored composition. From left to right, for any even positioned parts of color 2, we merge that part with the part before it. That is, every odd positioned part $p$ followed by an even positioned part $q_2$ are combined to form a new odd positioned part $(p+q)_{p+2}$. %Again, note the all the parts follows have the parity of their positions changed after this operation.
%while maintaining the position of the circle of the part with size 2. 
\end{proof}

For example, the composition %(Figure~\ref{fig:ex1})
\[3_3, 1_1, 6_4, 4_4\]
would map to 
\[1, 2_2, 1,6_4,2, 2_2\]
through the following process. Starting with $3_3$, since it is in an odd position and its color exceeds 1, we split it into $1_1 + 2_2$. Now, the composition is $1, 2_2, 1_1, 6_4, 4_4$. Next, $1_1$ is in an odd position (3rd), but its color is 1, so it remains unchanged. The part $6_4$ is now in an even position, so no change is made. Finally, $4_4$ is in an odd position and its color exceeds 1, so it is split into $2 + 2_2$. The resulting composition is $1, 2_2, 1, 6_4, 2, 2_2$.

\subsection{$\binom{n}{2}$-Color Compositions}
In Section 4.1 of \cite{Dedrickson2012}, a bijection between $\binom{n}{2}$-color compositions of $k$ and 01- and 12-avoiding ternary strings of length $k-2$ is presented. As one can find in sequence A095263 of OEIS (\url{https://oeis.org/A095263}), the ODD colored compositions share the same counting sequence.
Next, we present the bijection between the ODD colored compositions and $\binom{n}{2}$-color compositions. We will make use of the spotted tiling representation of the latter, where each tile has two different spots (hence ${n \choose 2}$ ways to choose the color). Note: for spotted tilings of ODD colored compositions, even parts are considered to have color 1.

\begin{theorem}\label{thm:n2}
For any positive integer $k$, the number of ODD colored compositions of $k$ is equal to the number of $\binom{n}{2}$-color compositions of $k+1$. 
\end{theorem}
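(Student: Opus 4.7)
The plan is to construct an explicit bijection via spotted tilings. Each $\binom{n}{2}$-tile of length $n$ carries two spots at positions $i<j$, and I will use these two spots to encode the color of an odd-positioned part together with the boundary between that odd-positioned part and the ensuing even-positioned part. To absorb the length mismatch of $1$, I will treat the very last cell of the $\binom{n}{2}$-color composition of $k+1$ as a \emph{phantom cell}, so that the remaining $k$ cells form the ODD colored composition.

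For each non-last tile of length $n$ with spots at $i<j$, I would output an odd-positioned colored part of size $j-1$ with color $i$, followed by an even-positioned colorless part of size $n-j+1$; together these fill the $n$ cells of the tile. For the last tile of length $n$ with spots $i<j$, the rule depends on whether $j=n$. If $j<n$, I output an odd-positioned colored part of size $j-1$ with color $i$ followed by an even-positioned colorless part of size $n-j$, consuming only the first $n-1$ cells and discarding the phantom. If $j=n$, the second spot coincides with the phantom, and I output a single odd-positioned colored part of size $n-1$ with color $i$. The inverse map groups the ODD composition into $(O,E)$-pairs from left to right, sending each pair of sizes $p,q$ with color $c$ to a tile of length $p+q$ with spots at $c$ and $p+1$; if the composition ends with an unpaired odd-positioned part (size $p$, color $c$), that trailing part becomes the last tile of length $p+1$ with spots at $c$ and $p+1$, whereas if the composition ends with an even-positioned part, the last pair's tile is extended by one cell to length $p_t+q_t+1$ (with spots still at $c_t$ and $p_t+1$) in order to absorb the phantom.

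The main subtlety I anticipate is verifying that the last-tile rule partitions the $\binom{n}{2}$ spot-configurations correctly via Pascal's identity $\binom{n}{2}=\binom{n-1}{2}+(n-1)$: the $\binom{n-1}{2}$ configurations with $j<n$ correspond to ODD compositions ending in an even-positioned part, while the $n-1$ configurations with $j=n$ correspond to those ending in an odd-positioned part. Once this combinatorial check is in place, the non-last correspondence is justified by the generating-function identity $\frac{x^2}{(1-x)^3}=\frac{x}{(1-x)^2}\cdot\frac{x}{1-x}$ established in Section~\ref{sec:count}, and the alternating parity of positions in the ODD composition is preserved automatically by the left-to-right order of construction.
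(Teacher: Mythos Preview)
Your construction is correct and is essentially the same bijection as the paper's, just described in the opposite direction: the paper starts from an ODD colored composition, gives every even-positioned part a spot in its first cell, merges each odd--even pair into a single two-spotted tile, and absorbs the length mismatch by either appending a spotted $1_1$ (odd number of parts) or an unspotted extra cell (even number of parts); cutting each two-spotted tile just before its second spot and dropping the final cell is exactly your forward map. Your Pascal-identity bookkeeping for the last tile makes explicit what the paper leaves implicit in the phrase ``the last cell is only spotted if it is a tile by itself.''
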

\begin{proof}
To define our map from ODD colored compositions of $k$ to the $\binom{n}{2}$-color compositions of $k+1$, we consider cases depending on the number of parts in the ODD colored compositions:

\begin{itemize}
    \item If there are an even number of parts in the ODD colored composition, we consider the spotted tiling representation. By combining every two adjacent tiles into one tile while keeping the spots at their original positions, we create a tiling representation with half the number of tiles and two spots in each tile. Expanding the last tile by one more cell (unspotted) gives us a tiling representation of $k+1$, where there are ${n \choose 2}$ ways to choose the two spots in each tile/part. Note that the last cell is always unspotted in this case. See Figure~\ref{fig3}. 

    \begin{figure}[htbp]
\centering
    \begin{tikzpicture}[scale=.6]

        \draw [line width=1mm] (0,-4) -- (16,-4) -- (16, -3)-- (0, -3) -- cycle;
        \draw (1,-4) -- (1,-3);
        \draw (2,-4) -- (2,-3);
        \draw [line width=1mm] (3,-4) -- (3,-3);
        \draw (4,-4) -- (4,-3);
        \draw (5,-4) -- (5,-3);
        \draw (6,-4) -- (6,-3);
        \draw [line width=1mm] (7,-4) -- (7,-3);
        \draw (8,-4) -- (8,-3);
        \draw (9,-4) -- (9,-3);
        \draw (10,-4) -- (10,-3);
        \draw (11,-4) -- (11,-3);
        \draw (12,-4) -- (12,-3);
        \draw [line width=1mm] (13,-4) -- (13,-3);
        \draw (14,-4) -- (14,-3);
        \draw (15,-4) -- (15,-3);

        \node[fill=black,circle,inner sep=3pt] at (2.5,-3.5) {};
        \node[fill=black,circle,inner sep=3pt] at (3.5,-3.5) {};
        \node[fill=black,circle,inner sep=3pt] at (11.5,-3.5) {};
        \node[fill=black,circle,inner sep=3pt] at (13.5,-3.5) {};

        \draw [line width=1mm] (0,-4-2) -- (17,-4-2) -- (17, -3-2)-- (0, -3-2) -- cycle;
        \draw (1,-4-2) -- (1,-3-2);
        \draw (2,-4-2) -- (2,-3-2);
        \draw (3,-4-2) -- (3,-3-2);
        \draw (4,-4-2) -- (4,-3-2);
        \draw (5,-4-2) -- (5,-3-2);
        \draw (6,-4-2) -- (6,-3-2);
        \draw [line width=1mm](7,-4-2) -- (7,-3-2);
        \draw (8,-4-2) -- (8,-3-2);
        \draw (9,-4-2) -- (9,-3-2);
        \draw (10,-4-2) -- (10,-3-2);
        \draw (11,-4-2) -- (11,-3-2);
        \draw (12,-4-2) -- (12,-3-2);
        \draw (13,-4-2) -- (13,-3-2);
        \draw (14,-4-2) -- (14,-3-2);
        \draw (15,-4-2) -- (15,-3-2);
        \draw (16,-4-2) -- (16,-3-2);

        \node[fill=black,circle,inner sep=3pt] at (2.5,-3.5-2) {};
        \node[fill=black,circle,inner sep=3pt] at (3.5,-3.5-2) {};
        \node[fill=black,circle,inner sep=3pt] at (11.5,-3.5-2) {};
        \node[fill=black,circle,inner sep=3pt] at (13.5,-3.5-2) {};

     \end{tikzpicture}
\caption{$\text{Mapping } 3_3+4_1+6_5+3_1=16 \text{ to } 7_{3,4}+10_{5,7}= 17$}
\label{fig3}
\end{figure}
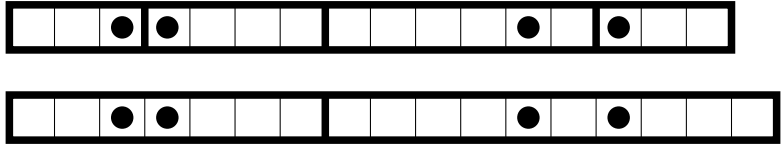

\item If there are an odd number of parts, we first append a single spotted cell/tile to the end of the spotted tiling representation, creating a spotted tiling of $k+1$ with an even number of parts. Then, combining every two adjacent tiles as described before yields a tiling representation of an ${n \choose 2}$-color composition of $k+1$. In this case, the last cell is always spotted. See Figure~\ref{fig4}.

\begin{figure}[htbp]
\centering
    \begin{tikzpicture}[scale=.6]

        \draw [line width=1mm] (0,-4) -- (15,-4) -- (15, -3)-- (0, -3) -- cycle;
        \draw (1,-4) -- (1,-3);
        \draw (2,-4) -- (2,-3);
        \draw (3,-4) -- (3,-3);
        \draw [line width=1mm] (4,-4) -- (4,-3);
        \draw (5,-4) -- (5,-3);
        \draw (6,-4) -- (6,-3);
        \draw [line width=1mm] (7,-4) -- (7,-3);
        \draw (8,-4) -- (8,-3);
        \draw (9,-4) -- (9,-3);
        \draw (10,-4) -- (10,-3);
        \draw (11,-4) -- (11,-3);
        \draw [line width=1mm] (12,-4) -- (12,-3);
        \draw (13,-4) -- (13,-3);
        \draw [line width=1mm] (14,-4) -- (14,-3);

        \node[fill=black,circle,inner sep=3pt] at (1.5,-3.5) {};
        \node[fill=black,circle,inner sep=3pt] at (4.5,-3.5) {};
        \node[fill=black,circle,inner sep=3pt] at (10.5,-3.5) {};
        \node[fill=black,circle,inner sep=3pt] at (12.5,-3.5) {};
        \node[fill=black,circle,inner sep=3pt] at (14.5,-3.5) {};

        \draw [line width=1mm] (0,-4-2) -- (16,-4-2) -- (16, -3-2)-- (0, -3-2) -- cycle;
        \draw (1,-4-2) -- (1,-3-2);
        \draw (2,-4-2) -- (2,-3-2);
        \draw (3,-4-2) -- (3,-3-2);
        \draw (4,-4-2) -- (4,-3-2);
        \draw (5,-4-2) -- (5,-3-2);
        \draw (6,-4-2) -- (6,-3-2);
        \draw [line width=1mm](7,-4-2) -- (7,-3-2);
        \draw (8,-4-2) -- (8,-3-2);
        \draw (9,-4-2) -- (9,-3-2);
        \draw (10,-4-2) -- (10,-3-2);
        \draw (11,-4-2) -- (11,-3-2);
        \draw (12,-4-2) -- (12,-3-2);
        \draw (13,-4-2) -- (13,-3-2);
        \draw [line width=1mm](14,-4-2) -- (14,-3-2);
        \draw (15,-4-2) -- (15,-3-2);

        \node[fill=black,circle,inner sep=3pt] at (1.5,-3.5-2) {};
        \node[fill=black,circle,inner sep=3pt] at (4.5,-3.5-2) {};
        \node[fill=black,circle,inner sep=3pt] at (10.5,-3.5-2) {};
        \node[fill=black,circle,inner sep=3pt] at (12.5,-3.5-2) {};
        \node[fill=black,circle,inner sep=3pt] at (14.5,-3.5-2) {};
        \node[fill=black,circle,inner sep=3pt] at (15.5,-3.5-2) {};

     \end{tikzpicture}
\caption{$\text{Mapping } 4_2+3_1+5_4+2_1+1_1=15 \text{ to } 7_{2,5}+7_{4,6}+2_{1,2}= 16$}
\label{fig4}
\end{figure}
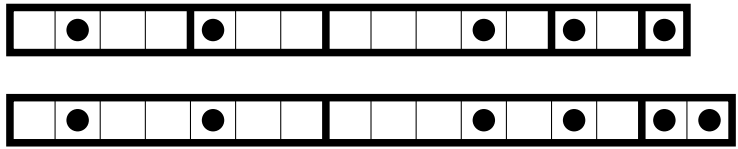
\end{itemize}

To define the inverse map, we cut every tile (with two spots) into two tiles immediately before the second spot. Note that this ensures that all even positioned parts have their spots at the beginning of the tile (hence colored 1, or uncolored). Lastly, we drop the last cell. Note that the last cell is only spotted if it is a tile by itself.
\end{proof}

As pointed out in OEIS, A095263 is also the counting sequence of the number of ways to split $k$ into an unspecified number of intervals and then choose 2 blocks from each interval. This is combinatorially explained through our bijection, because choosing two blocks in each interval is the same as choosing two spots for each part.

Furthermore, as a consequence of Theorem~\ref{thm:n2}, we have indirectly shown a bijection between odd-part n-color compositions of $k$ and the aforementioned ternary strings \cite{Dedrickson2012}. 

\begin{cor}\label{cor1}
For any positive integer $k$, the number of ODD colored compositions of $k$ is equal to the number of 01- and 12-avoiding ternary strings of length $k-1$.
\end{cor}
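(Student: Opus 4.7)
The plan is to obtain the corollary by directly composing two bijections that are already in hand. By Theorem~\ref{thm:n2}, the ODD colored compositions of $k$ are in bijection with the $\binom{n}{2}$-color compositions of $k+1$. By Section 4.1 of \cite{Dedrickson2012}, the $\binom{n}{2}$-color compositions of any positive integer $N$ are in bijection with the 01- and 12-avoiding ternary strings of length $N-2$. Setting $N = k+1$ and chaining these two maps produces a bijection from ODD colored compositions of $k$ to 01- and 12-avoiding ternary strings of length $(k+1) - 2 = k - 1$, which is exactly the claim.

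The only bookkeeping to do is the index shift: Theorem~\ref{thm:n2} sends a weight-$k$ object to a weight-$(k+1)$ object, while the Dedrickson--Sagan bijection subtracts $2$ from the weight to get the string length. Combining the two therefore maps weight $k$ to length $k-1$, as required. No further combinatorial construction is needed, since the composition of two bijections is a bijection.

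There is no real obstacle here beyond being careful with the argument of the length function. If a reader wants a more self-contained presentation, one can additionally describe the composed map explicitly in the spotted-tiling language: apply the tile-pairing procedure from the proof of Theorem~\ref{thm:n2} to go from an ODD colored composition of $k$ to a $\binom{n}{2}$-colored tiling of $k+1$, then invoke the cited bijection on the resulting tiling. Stating the corollary as a formal consequence of Theorem~\ref{thm:n2} and the result of \cite{Dedrickson2012} is sufficient for the counting identity.
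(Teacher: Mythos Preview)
Your argument is correct and matches the paper's own justification: the corollary is obtained by composing Theorem~\ref{thm:n2} with the Dedrickson bijection from \cite{Dedrickson2012}, with exactly the index shift $k \mapsto k+1 \mapsto (k+1)-2 = k-1$ that you spell out. The paper presents no separate proof beyond noting this composition, so there is nothing to compare.
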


%\subsubsection{Choosing 2 Blocks From Subintervals}
%In \url{https://oeis.org/A095263}, it is stated that the number of ways to split $k$ into an unspecified number of intervals and then choose 2 blocks from each interval is the number of odd-part $n$-color compositions of $k-1$ (see the OEIS page for an example). 
%\\The bijection we just proved is equivalent to this statement because choosing 2 blocks from each interval is the same as choosing 2 colors from each part which is an $\binom{n}{2}$-color composition. 

\subsection{Ternary Strings}
Corollary~\ref{cor1} can be generalized to a new bijection that is interesting on its own, where the EVEN colored compositions are mapped to a different type of ternary strings.

\begin{theorem}
The EVEN colored compositions of $k$ are equinumerous with the ternary strings of length $k$ which restrict consecutive digits, and do not begin with a 2 or end with a 0. 
\end{theorem}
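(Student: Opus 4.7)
The plan is to reduce this theorem to Corollary~\ref{cor1} by prepending the unique colored size-one tile $1_1$ to an EVEN colored composition of $k$. Doing so yields an ODD colored composition of $k+1$ whose first part is $1_1$, and stripping the first part is the inverse; this gives a size-preserving bijection between EVEN colored compositions of $k$ and those ODD colored compositions of $k+1$ that begin with $1_1$. By Corollary~\ref{cor1}, ODD colored compositions of $k+1$ are in bijection with 01- and 12-avoiding ternary strings of length $k$, so the task becomes identifying the image of the subfamily beginning with $1_1$.

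To describe that image, I would trace the condition ``first part is $1_1$'' through the chain of Theorem~\ref{thm:n2} followed by the bijection of \cite{Dedrickson2012}. Under Theorem~\ref{thm:n2}, prepending $1_1$ forces the first combined tile of the resulting $\binom{n}{2}$-color composition of $k+2$ to carry its two spots at positions $1$ and $2$: the $1_1$ contributes a spot at position $1$, while the subsequent uncolored part is treated as color $1$ and its spot shifts to position $2$ after the merger. So the image consists of those $\binom{n}{2}$-color compositions of $k+2$ whose leftmost tile has spots at positions $(1,2)$. I would then inspect Dedrickson's construction at the leftmost and rightmost tiles to verify that ``spots at $(1,2)$ in the leftmost tile'' translates precisely to ``the ternary string does not begin with $2$ and does not end with $0$.''

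As a consistency check, I would verify by a transfer-matrix computation that both families satisfy $a_n = 3a_{n-1} - 2a_{n-2} + a_{n-3}$ with $a_1=1$, $a_2=2$, $a_3=5$; for the ternary strings this follows from the transition structure $0\mapsto\{0,2\}$, $1\mapsto\{0,1\}$, $2\mapsto\{0,1,2\}$ combined with the start/end restrictions, and produces the same rational generating function as \eqref{eq:genE}. The main obstacle is the translation step, since Dedrickson's bijection is not restated here; the hard part will be cleanly identifying how the leftmost-tile spot condition becomes the start-not-$2$, end-not-$0$ condition on strings. If that inspection becomes unwieldy, my fallback is to build a direct bijection on spotted tilings, assigning each cell a ternary digit determined by its spot status, its position within its tile, and the parity of the tile's position, then verifying cell by cell that the forbidden substrings $01$ and $12$ correspond exactly to impossible local transitions in EVEN colored compositions, and that the start and end constraints follow because the first tile is always uncolored.
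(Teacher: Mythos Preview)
Your reduction is sound: prepending $1_1$ gives a bijection between EVEN colored compositions of $k$ and ODD colored compositions of $k+1$ whose first part is $1_1$, and your identification of the image under Theorem~\ref{thm:n2} as the $\binom{n}{2}$-color compositions of $k+2$ whose leftmost tile has spots at positions $(1,2)$ is correct. The transfer-matrix check is also fine as a consistency test. However, the decisive step---showing that ``leftmost tile has spots at $(1,2)$'' is carried by Dedrickson's bijection to ``string does not begin with $2$ and does not end with $0$''---is not done, and you concede as much. Since that bijection is never restated in the paper, this leaves a genuine gap rather than a deferred routine verification. There is also a warning sign you should take seriously: you are asking a \emph{one-sided} condition on the composition (on the leftmost tile only) to produce a \emph{two-sided} condition on the string (a restriction at each end). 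That can happen if Dedrickson's map trims both ends or reverses orientation, but it is not automatic, and you should expect to have to argue it carefully rather than merely ``inspect'' it.

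The paper proceeds differently and avoids the external dependence entirely: it builds a direct bijection on the spotted tiling by assigning a digit from $\{0,1,2\}$ to each of the $k$ vertical lines (including the final boundary, excluding the initial one). A line interior to an odd-positioned tile becomes $1$; a line in an even-positioned tile that lies before (respectively after) the spot, including the starting (respectively ending) boundary of that tile, becomes $0$ (respectively $2$); and the terminal line of a final odd tile becomes $1$. The forbidden substrings $01$ and $12$ and the start/end restrictions then fall out immediately from the local transition rules. Your fallback plan is essentially this construction; if you pursue it, the only point to be careful about is that you label \emph{lines}, not cells, so that the count is exactly $k$ and the boundary conventions give the ``not beginning with $2$, not ending with $0$'' conditions for free.
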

\begin{proof}
We start with the spotted tiling representation of each composition, and consider each (vertical) line that separates adjacent square cells, including the end but not the beginning. They will be mapped to 0, 1, or 2. 
\begin{itemize}
\item If the line is completely inside an odd positioned part (with no spot), it is mapped to a 1. 
\item If the line is before the spot in an even positioned part (including the partition line that starts the part), it is mapped to a 0.  
\item If the line is after the spot in an even part (including the partition line that ends the part), it is mapped to a 2. 
\item If the tiling ends on an odd positioned part, then the final line is mapped to a 1.
\end{itemize}

See Figure~\ref{fig:tstring} for an illustration of the above process.
From the definition of this map, we see that:
\begin{itemize}
\item A 0 is always followed by a 0 or a 2.
\item A 1 is always followed by a 1 or a 0.
\item The composition cannot start with a 2 because a 0 must come before the first 2. 
\item The composition cannot end with a 0. 
\end{itemize}

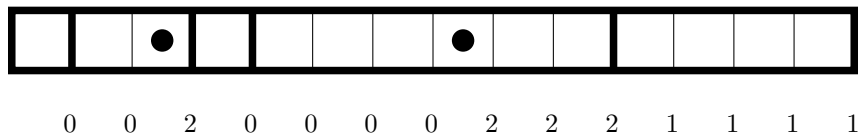
\begin{figure}[htbp]
\centering
    \begin{tikzpicture}[scale=.8]

        \draw [line width=1mm] (0,-4) -- (14,-4) -- (14, -3)-- (0, -3) -- cycle;
        \draw [line width=1mm](1,-4) -- (1,-3);
        \draw (2,-4) -- (2,-3);
        \draw [line width=1mm] (3,-4) -- (3,-3);
        \draw [line width=1mm] (4,-4) -- (4,-3);
        \draw (5,-4) -- (5,-3);
        \draw (6,-4) -- (6,-3);
        \draw (7,-4) -- (7,-3);
        \draw (8,-4) -- (8,-3);
        \draw (9,-4) -- (9,-3);
        \draw [line width=1mm] (10,-4) -- (10,-3);
        \draw (11,-4) -- (11,-3);
        \draw (12,-4) -- (12,-3);
        \draw (13,-4) -- (13,-3);

        \node[fill=black,circle,inner sep=3pt] at (2.5,-3.5) {};
        \node[fill=black,circle,inner sep=3pt] at (7.5,-3.5) {};

     \end{tikzpicture}
\[\hspace{1.15cm}0 \hspace{.625cm}0\hspace{.625cm} 2\hspace{.625cm} 0\hspace{.625cm} 0\hspace{.625cm} 0\hspace{.625cm} 0\hspace{.625cm} 2\hspace{.625cm} 2\hspace{.625cm} 2\hspace{.625cm} 1\hspace{.625cm} 1\hspace{.625cm} 1\hspace{.625cm} 1\hspace{.625cm}\]
\caption{$\text{Mapping } 1+2_2+1+6_4+4 = 14 \text{ to } 00200002221111$}
\label{fig:tstring}
\end{figure}

To define the inverse, we map every substring of consecutive 1's of length $m$ to an odd positioned part with size $m+1$. If there are no 1's between a 2 and a 0, or at the start of the string, an odd positioned part of size 1 is created. Every substring of consecutive 0's of length $a$ followed by a substring of consecutive 2's of length $b$ becomes an even positioned part of size $a+b-1$ with color $a$. 
\end{proof}

\subsection{Boolean Strings}

The following is observed from the sequence A034943 (\url{https://oeis.org/A034943}). We provide a combinatorial proof here.

\begin{theorem}
The number of EVEN colored compositions of $k+1$ is equal to the sum of the products of the lengths of the runs of 1's in all binary (boolean) strings of length $k$. 
\end{theorem}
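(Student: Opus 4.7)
The plan is to prove the identity bijectively, starting by reinterpreting the right-hand side as a single count. For any binary string $s$ of length $k$ with maximal $1$-runs of lengths $\ell_1,\ldots,\ell_r$, the product $\ell_1\cdots\ell_r$ equals the number of ways to distinguish one position within each $1$-run of $s$ (with $r=0$ yielding the empty product $1$, realized by the all-zeros string). So the right-hand sum counts \emph{marked binary strings} of length $k$: pairs $(s,\mu)$ in which $\mu$ selects exactly one $1$ in each maximal $1$-run of $s$. I will prove the theorem by exhibiting a bijection between these marked strings and the EVEN colored compositions of $k+1$.

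The cleaner direction to describe is from marked strings to compositions. Given a marked binary string of length $k$, I prepend a $0$ to obtain a length-$(k+1)$ string that begins with a $0$ and whose maximal $0$-runs and $1$-runs strictly alternate. Read these consecutive maximal runs from left to right as the parts of a composition: each $0$-run of length $s$ becomes an uncolored part of size $s$, and each $1$-run of length $s$ with its mark on the $c$-th $1$ becomes a colored part of size $s$ with color $c$. Because the string now begins with a $0$-run, every $0$-run lies at an odd position and every $1$-run at an even position of the resulting composition, so the output is a legitimate EVEN colored composition of $k+1$.

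The inverse map is the reverse encoding: replace each uncolored part of size $s$ by the run $0^s$, and each colored part of size $s$ with color $c$ by a run of $s$ ones whose $c$-th $1$ is marked; concatenate and then delete the (necessarily $0$) leading character. The main thing to verify is that these two maps are well-defined and mutually inverse, which reduces to two small checks: first, that prepending a $0$ never merges two maximal runs (it cannot, since the original string either starts with a $0$, in which case the first $0$-run simply lengthens by one, or starts with a $1$ or is empty, in which case a fresh $0$-run of length $1$ is created); and second, that stripping the leading $0$ of a length-$(k+1)$ encoding never destroys marking data (it cannot, since that leading $0$ belongs to an uncolored part and carries no mark). I do not anticipate a real obstacle here; the only subtlety is the case where the marked string begins with a $1$-run, which produces the isolated leading $0$-run of length $1$ described above. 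As a sanity check at $k=2$, the five EVEN compositions of $3$, namely $(3),\ (1,2_1),\ (1,2_2),\ (2,1_1),\ (1,1_1,1)$, map respectively to the five marked binary strings $00,\ \mathbf{1}1,\ 1\mathbf{1},\ 0\mathbf{1},\ \mathbf{1}0$ (with boldface denoting the mark).
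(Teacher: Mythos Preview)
Your proof is correct and follows essentially the same approach as the paper: both prepend a $0$ to the binary string and read off the alternating $0$- and $1$-runs as the odd and even positioned parts of an EVEN colored composition of $k+1$. Your explicit device of ``marked binary strings'' simply makes the paper's multiplicity count (that a composition with even parts of sizes $\ell_1,\ldots,\ell_r$ admits $\ell_1\cdots\ell_r$ colorings) into a one-to-one map, but the underlying idea is identical.
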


\begin{proof}
To map a binary string to an EVEN colored composition, we first add a zero at the beginning. Then, we map a substring of consecutive 0's or 1's to a part of the corresponding size. For instance, starting from 001100, we have 0001100, resulting in the (regular) composition (3,2,2). 

Note that by putting a 0 at the front of each binary string, all the substrings of 0's correspond to odd positioned parts, and all the substrings of 1's correspond to even positioned parts. 

Now, each of the resulting (regular) compositions corresponds to multiple EVEN colored compositions, as the number of choices for the color of an even positioned part is exactly its size. Hence the total number of color choices is the product of the even positioned part sizes. Consequently, 
%Then, when multiplying the length of the runs of 1s, we are multiplying the even parts of each composition of $k+1$, which gives us the count for the number of even-part $n$-color compositions of $k+1$. Thus, 
the number of EVEN colored compositions of $k+1$ is the same as the sum of the product of the lengths of the runs of 1's in each binary (boolean) string of length $k$. See Figure~\ref{fig:boolean} for an example. %, where the runs of 1s are the even parts in the composition, and the number of 1s in each run corresponds to the number of colors that are possible for each even part.  

\begin{figure}[htbp]
\centering
    \begin{tikzpicture}[scale=.8]

        \draw [line width=1mm] (0,-4) -- (14,-4) -- (14, -3)-- (0, -3) -- cycle;
        \draw [line width=1mm](1,-4) -- (1,-3);
        \draw (2,-4) -- (2,-3);
        \draw [line width=1mm] (3,-4) -- (3,-3);
        \draw [line width=1mm] (4,-4) -- (4,-3);
        \draw (5,-4) -- (5,-3);
        \draw (6,-4) -- (6,-3);
        \draw (7,-4) -- (7,-3);
        \draw (8,-4) -- (8,-3);
        \draw (9,-4) -- (9,-3);
        \draw [line width=1mm] (10,-4) -- (10,-3);
        \draw (11,-4) -- (11,-3);
        \draw (12,-4) -- (12,-3);
        \draw (13,-4) -- (13,-3);

        \node[fill=black,circle,inner sep=3pt] at (1.5,-3.5) {};
        \node[fill=black,circle,inner sep=3pt] at (2.5,-3.5) {};
        \node[fill=black,circle,inner sep=3pt] at (4.5,-3.5) {};
        \node[fill=black,circle,inner sep=3pt] at (5.5,-3.5) {};
        \node[fill=black,circle,inner sep=3pt] at (6.5,-3.5) {};
        \node[fill=black,circle,inner sep=3pt] at (7.5,-3.5) {};
        \node[fill=black,circle,inner sep=3pt] at (8.5,-3.5) {};
        \node[fill=black,circle,inner sep=3pt] at (9.5,-3.5) {};
        
     \end{tikzpicture}
\[\hspace{.625cm} 0 \hspace{.625cm}1\hspace{.625cm} 1\hspace{.625cm} 0\hspace{.625cm} 1\hspace{.625cm} 1\hspace{.625cm} 1\hspace{.625cm} 1\hspace{.625cm} 1\hspace{.625cm} 1\hspace{.625cm} 0\hspace{.625cm} 0\hspace{.625cm} 0\hspace{.625cm} 0\hspace{.625cm}\]
\caption{$\text{Mapping } 1101111110000 \text{ to } 1+2_{i}+1+6_{j}+4$ where $1\leq i \leq 2 \text{ and } 1\leq j\leq6$}
\label{fig:boolean}
\end{figure}

\end{proof}

Following the same essential argument, we have the following generalization of the above observation. The bijection is defined similarly after adding a 1 (instead of a 0) at the very beginning.
\begin{theorem}
The number of ODD colored compositions of $k$ is equal to the sum of the products of the lengths of the runs of 1's in all binary (boolean) strings of length $k$ that start with a 1.
\end{theorem}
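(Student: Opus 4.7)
The plan is to adapt the bijection from the preceding theorem, with the roles of $0$ and $1$ interchanged; the restriction that the string must begin with a $1$ plays the role of the prepended $0$ in the EVEN case, forcing the first run to have the correct parity type. Given a binary string $s$ of length $k$ that begins with $1$, I split $s$ into its maximal runs of identical symbols. Because $s$ starts with $1$, the runs alternate starting with $1$'s, so runs of $1$'s land in odd positions and runs of $0$'s in even positions; each run of length $a$ then becomes a part of size $a$ in a regular composition of $k$.

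In the ODD colored setting, odd positioned parts are $n$-colored while even positioned parts are uncolored, so an odd positioned part of size $a$ carries exactly $a$ color choices and an even positioned part carries none. Thus the number of distinct ODD colored compositions that refine the regular composition associated with $s$ is precisely the product of the lengths of the runs of $1$'s in $s$. Summing this quantity over all binary strings of length $k$ beginning with $1$ therefore enumerates the ODD colored compositions of $k$.

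For the inverse, read an ODD colored composition of $k$ left to right and write, for each part, a run of $1$'s (if the part is in an odd position) or of $0$'s (if it is in an even position) of length equal to the part's size; the color $c \in \{1, \dots, a\}$ of each odd positioned part of size $a$ then selects the pre-image among the $a$ color-choices accounted for by the product in the forward map. The string produced has total length $k$, begins with $1$ (since the first part is in an odd position), and has alternating runs, which confirms both well-definedness and bijectivity. As in the preceding theorem there is no genuine obstacle; the only point requiring care is the parity bookkeeping — ensuring that the alternation of $0$-runs and $1$-runs lines up exactly with the alternation of even and odd positions, so that the product of run lengths of $1$'s (and not of $0$'s) records the correct number of color choices.
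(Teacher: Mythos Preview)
Your argument is correct and follows essentially the same route as the paper, which simply remarks that the bijection from the EVEN case carries over ``after adding a 1 (instead of a 0) at the very beginning''; your observation that the required initial $1$ plays exactly the role of the prepended $0$ is precisely this. One small wording issue: the map you describe is not literally a bijection but a surjection from ODD colored compositions onto such binary strings whose fiber over $s$ has size equal to the product of the $1$-run lengths, so the word ``bijectivity'' in your last paragraph should be replaced by a statement about fiber sizes (the paper's proof in the EVEN case is phrased this way as well).
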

%\begin{proof}
%Take every composition of length $k-1$ and add a 1 to the beginning of it to ensure that it begins with a 1. \\Now, biject each binary string to a composition by having the length of each run of 0s or 1s become a part of the size of that run. 
%\\ Then, when multiplying the length of the runs of 1s, we are multiplying the odd parts of each composition of $k$ that starts with a 1, which gives us the count for the number of odd-part $n$-color compositions of $k$. Thus, the number of odd-part $n$-color compositions of $k$ is equinumerous with the sum of the product of the lengths of the runs of 1s in each binary(boolean) string of length $k$ that starts with a 1.
%\end{proof}

\subsection{Related identities}
Let $\{ e(n) \}$ and $\{ o(n) \}$ be counting sequences of the EVEN and ODD colored compositions, respectively. The identity
\begin{equation}\label{eq:1}
e(k+1) = e(k)+o(k)
\end{equation}
can be observed from the Online Encyclopedia of Integer Sequences. With the colored composition interpretation, we can provide a combinatorial proof here.

\begin{proof}[Combinatorial proof of \eqref{eq:1}]
Our proof will follow a bijective map from the collection of EVEN colored compositions of $k+1$ to the disjoint union of EVEN colored compositions of $k$ and ODD colored compositions of $k$. 

Start from an EVEN colored composition of $k+1$, we consider the first part (which is not colored):

\begin{itemize}
    \item If the first part is 1, then remove it to get an ODD colored composition of $k$.
\item If the first part is not 1, then subtract 1 from that part to get an EVEN colored composition of $k$. 
\end{itemize}

To define the inverse map, place a part of size 1 in front of all ODD colored compositions of $k$, which turns them into EVEN colored compositions of $k+1$ that start with a part 1. For EVEN colored compositions of $k$, increase the first part by 1, which turns them into EVEN colored compositions of $k+1$ with the first part $>1$. 
\end{proof}

Let $c_{m,k}(\ell)$ denote the number of $(m,k)$-$n$-colored compositions of $\ell$, the above argument leads to the following generalization.

\begin{theorem}
    For any $k\geq 1$, $\ell \geq 2$, and $1 \leq m \leq \ell -1$, we have
    $$ c_{m,k+1}(\ell + 1) =  c_{m,k+1}(\ell) + c_{m,k}(\ell) . $$
\end{theorem}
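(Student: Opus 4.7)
\noindent\emph{Proof plan.} My strategy is to lift the bijection that proved \eqref{eq:1} to the general $(m,k)$ setting. The feature exploited in the base case is that the first part of an EVEN colored composition is uncolored, because position~$1$ is odd. The analogue here is that the first part of an $(m,k+1)$-$n$-colored composition is uncolored whenever $k+1 \not\equiv 1 \pmod{m}$, i.e.\ whenever $k \not\equiv 0 \pmod{m}$; I would assume this throughout (it is automatic under the natural convention $1 \le k \le m-1$).

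Given an $(m,k+1)$-$n$-colored composition of $\ell+1$, I would inspect its uncolored first part. If its size equals $1$, delete it; every remaining position then shifts down by $1$, so a position originally $\equiv k+1 \pmod{m}$ becomes $\equiv k \pmod{m}$, and the result is an $(m,k)$-$n$-colored composition of $\ell$. If the first part has size at least $2$, I would decrement it by $1$ in place, leaving all positions and the coloring scheme unchanged, thereby producing an $(m,k+1)$-$n$-colored composition of $\ell$ whose first part is still uncolored.

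For the reverse direction, I would prepend an uncolored part of size $1$ to any $(m,k)$-$n$-colored composition of $\ell$: this shifts positions up by $1$ so that the colored residue class $k \pmod{m}$ is sent to $k+1 \pmod{m}$, yielding an $(m,k+1)$-$n$-colored composition of $\ell+1$ whose first part equals $1$. Alternatively, I would add $1$ to the (uncolored) first part of an $(m,k+1)$-$n$-colored composition of $\ell$, producing an $(m,k+1)$-$n$-colored composition of $\ell+1$ whose first part is at least $2$. A short verification shows these two inverses cover the disjoint cases ``first part $=1$'' and ``first part $\ge 2$'', completing the bijection.

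The main obstacle I anticipate is the residue-class bookkeeping under position shifts --- one must confirm that deleting or prepending a part really does exchange the colored class $k+1 \pmod{m}$ with $k \pmod{m}$ consistently across the whole composition. A secondary subtlety is the implicit hypothesis that the first part of $(m,k+1)$-compositions is uncolored; if the identity is intended to hold for every $k \ge 1$, then the residue $k \equiv 0 \pmod{m}$ (where the first part is colored) requires a different argument, since the naive ``delete or decrement'' operations no longer preserve the coloring structure.
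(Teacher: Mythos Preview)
Your proposal is correct and is precisely the argument the paper has in mind: the paper does not write out a separate proof but simply says ``the above argument leads to the following generalization,'' referring to the delete-or-decrement bijection on the first part used for \eqref{eq:1}, which is exactly what you carry out. Your observation that the argument requires position~$1$ to lie outside the colored residue class (equivalently $k\not\equiv 0\pmod m$, which holds under the natural reading $1\le k\le m-1$) is a valid caveat that the paper leaves implicit.
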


%be the number of compositions of $m$ that are $k \pmod \ell$-part $n$-colored. Let $b(m)$ be the number of compositions of $m$ that are $k+1 \pmod \ell$-part $n$-colored. Then, 
%\[b(m+1) = a(m)+b(m)\]
%for all $m$ from 1 to $\ell-1$.
%\\\\ To prove this, the process is the exact same as in Section 3.5.1. 

\subsection{321-avoiding separable permutations}

Last but not least, we note that (\url{https://oeis.org/A034943}) the EVEN colored compositions of $k$ are equinumerous with the 321-avoiding separable permutations of $[k]$. 

A separable permutation avoids the patterns 2413 and 3142. Among many interesting interpretations of separable permutations, we use the labeled binary tree structure to illustrate our proof. Such a rooted binary tree has leaves labeled with elements of $[k]$, and the descendants of each node form a contiguous subset of $[k]$. Every interior node is either positive, in which case all descendants of the left child are smaller than all descendants of the right child; or negative, in which case all descendants of the left child are greater than all descendants of the right child. 

For 321-avoiding separable permutations, there is the additional restraint that every interior descendant of a negative node must be positive (Figure~\ref{fig:sep}). Note that, however, such trees are not unique as a pair of adjacent positive nodes (there are no adjacent negative nodes by definition) can be replaced by a different pair of nodes using a tree rotation operation.

\begin{figure}[htbp!]
    \centering
    \includegraphics[scale = .55]{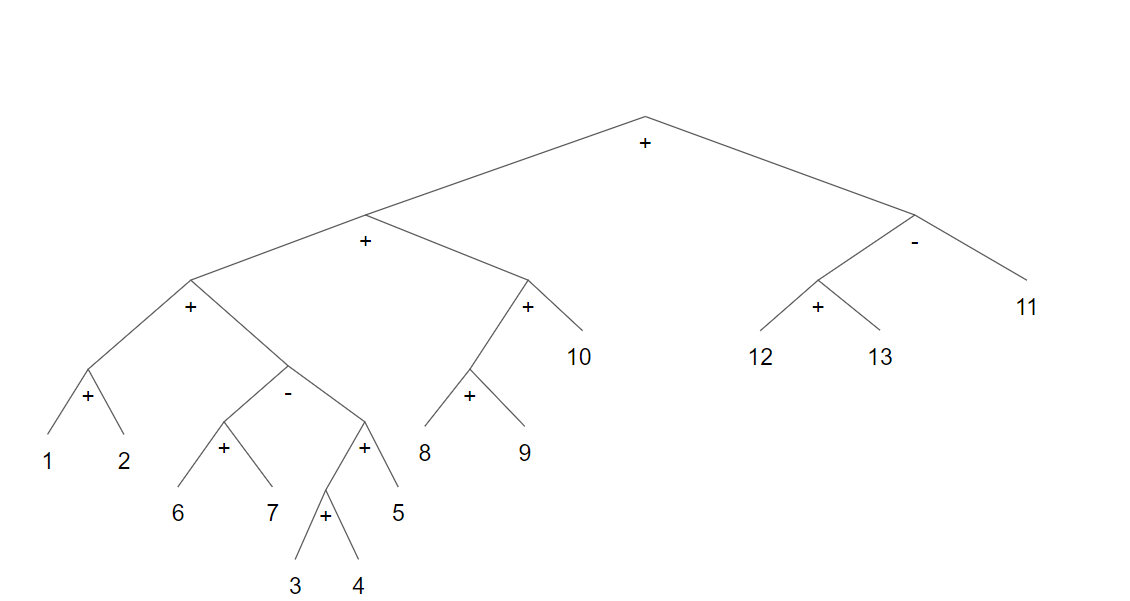}
    \caption{The 321-avoiding separable permutation \\$(1,2,6,7,3,4,5,8,9,10,12,13,11)$ maps to $3+4_2+4+2_2$.}\label{fig:sep}
\end{figure}

%{\color{red} First, recall that a permutation is a bijective arrangement of a set of elements. Here, we consider a sequence of integers where each integer appears once. A separable permutation is a permutation that avoids the patterns 2413 and 3142, meaning no subsequence of four elements can form these relative orderings. }

%Among many interesting representations of separable permutations, we use the labeled binary tree structure (\url{https://en.wikipedia.org/wiki/Separable_permutation}) to illustrate our proof. {\color{red} (...details and example here...)} %every separable permutation can be expressed as a binary tree. For 321-avoiding separable permutations, there is the additional restraint that every node under a - node must be + nodes, or else it is not 321-avoiding. 

\begin{theorem}
    For any positive integer $k$, the EVEN colored compositions of $k$ are equinumerous with the 321-avoiding separable permutations of $[k]$.
\end{theorem}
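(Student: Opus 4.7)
The plan is to give an explicit bijection $\phi$ between 321-avoiding separable permutations of $[k]$ and EVEN colored compositions of $k$, using the canonical labeled binary tree representation discussed before the theorem. I would first fix a canonical form: merge consecutive positive nodes into a single multi-ary positive node, and observe that in a 321-avoiding separable tree every negative node must be binary (a multi-ary negative node with three children already contains the pattern $321$), and its two children must be leaves or pure-positive subtrees (identity permutations), since no negative interior descendant of a negative node is allowed. This canonical tree is unique, and the whole tree reads as a left-to-right sequence of units $U_1, \ldots, U_m$, where each $U_i$ is either a leaf or a binary negative subtree with child sizes $L \geq 1$ and $R \geq 1$.

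The map $\phi$ scans the units left to right while maintaining a pending non-colored part, initially $0$. A leaf adds $1$ to the pending part. A negative unit with child sizes $L$ and $R$ first adds $1$ to the pending part and emits it as the next non-colored part of the output composition, then emits a colored part of size $L+R-1$ with color $L$, and resets the pending part to $0$. Any leaves remaining after the last negative unit produce a trailing non-colored part. As a sanity check, in Figure~\ref{fig:sep} the two negative units have $(L,R) = (2,3)$ and $(2,1)$, producing colored parts $4_2$ and $2_2$, while the non-colored parts $3$ and $4$ come from the $2$ and $3$ preceding leaves plus the $+1$ contributed by each negative unit. The inverse reads a composition $(a_1, b_1^{c_1}, a_2, b_2^{c_2}, \ldots)$ by forming a negative unit with $(L, R) = (c_i,\, b_i - c_i + 1)$ for each colored part (well-defined because $1 \leq c_i \leq b_i$ forces $L, R \geq 1$), placing $a_j - 1$ leaves just before the $j$-th negative unit, and placing $a_{\text{last}}$ leaves after the final negative unit whenever the composition ends on a non-colored part; the resulting units are then attached as children of a single positive root, or taken as the sole tree when $m = 1$.

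Verifying that the two maps are mutually inverse is routine once the canonical form is fixed, and the reconstructed tree is automatically in canonical form (negative nodes binary with only identity children, no adjacent positive nodes), so it represents a genuine 321-avoiding separable permutation. The main obstacle is the bookkeeping around the $+1$ that each negative unit contributes to its preceding non-colored part, together with the asymmetric treatment of the trailing non-colored part, which appears precisely when the composition ends on a non-colored rather than a colored part. Once this case distinction is stated cleanly, size preservation is a telescoping check and the range check $L \leq L + R - 1$, guaranteeing that the color lies in $\{1, \ldots, b\}$ for each colored part of size $b$, is immediate.
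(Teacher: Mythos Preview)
Your proposal is correct and takes essentially the same approach as the paper: negative nodes with child sizes $(L,R)$ become colored parts $(L+R-1)_L$, and the runs of leaves between them become non-colored parts with the $+1$ offset. Your treatment is in fact more careful than the paper's on two points the paper leaves implicit---fixing a canonical tree by merging adjacent positive nodes, and handling the asymmetric trailing non-colored part when leaves appear after the last negative node---so the bookkeeping you flag as the ``main obstacle'' is exactly the detail the paper glosses over.
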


 \begin{proof}
 To see the bijection between these objects, we start with a binary tree representation for a 321-avoiding separable permutation. For each negative node with $a$ leaf descendants from its left child and $b$ leaf descendants from its right child, we construct a colored part $(a+b-1)_a$. Note that this is possible as $b\geq 1$. Such parts will be even positioned parts that are $n$ colored.

In Figure~\ref{fig:sep}, we have $4_2$ from the leaves $(6,7,3,4,5)$ and $2_2$ from the leaves $(12,13,11)$.

Next, for any $c$ leaves (whose labels are consecutively increasing) induced by the leaves between the negative nodes (including those to the left of the first negative node), we construct a non-colored part $c+1$. Note that these leaf labels are increasing as there are only positive interior nodes involved. In the case that there are no leaves in between two negative nodes (or to the left of the first negative node), we have a part of size $0+1=1$. These will be our odd positioned parts.

In Figure~\ref{fig:sep}, we have $3$ from the leaves $(1,2)$ and $4$ from the leaves $(8,9,10)$.

Consequently, the 321-avoiding separable permutation $$(1,2,6,7,3,4,5,8,9,10,12,13,11)$$ is mapped, via the binary tree representation, to the EVEN colored composition $3+4_2+4+2_2$.

To see the reverse map, we first subtract 1 from each odd positioned part and add 1 to each even positioned part, then allocate the number of leaves accordingly. From  $3+4_2+4+2_2$, we have 2 leaves before the first negative node, a (2,3) split under the first negative node, three leaves after the first negative node, and a (2,1) split under the second negative node.

\end{proof}

\section{Concluding remarks and future work}
\label{sec:con}
We studied the compositions where some parts are $n$-colored and some are not, depending on their positions in the composition. In particular, we find connections from the so-called EVEN and ODD colored compositions to many other combinatorial objects. Bijections are provided for these connections. 

Several natural directions for future study emerge from this work. First, the generating functions for general position $n$-color compositions are provided in Section~2.3, but the connection between these compositions and other combinatorial objects has not been studied. One could also try proving Corollary~\ref{cor1} combinatorially and generalizing it and other results proven in this paper.

Additionally, in Section~3.1, compositions that restrict color 2 are studied, which invites broader questions, such as: How do compositions that restrict other colors or combinations of colors relate combinatorially to positional $n$-color compositions? Are there any interesting properties of positional $n$-color compositions that restrict certain colors?

%\begin{itemize}
%\item ...direct bijection to ternary strings...generalizations?
%\item ...sequences being the first difference of each other? generalizations?
%\item In section 3.4, we showed relationships between binary strings and even/odd-part $n$-color compositions. 
%\\ Similar relations exist for ternary strings and beyond. However, the strings must have a run of 0s followed by a run of 1s followed by a run of 2s and so on and then back to a run of 0s (or some other permutation of 0, 1, 2$\hdots n$) in order for the equinumerous relation to hold. 
%\end{itemize}

\end{document}